\numberwithin{equation}{section}
\numberwithin{figure}{section}
\theoremstyle{plain}
\newtheorem*{cor*}{\protect\corollaryname}
\theoremstyle{plain}
\newtheorem{thm}{\protect\theoremname}[section]
\theoremstyle{definition}
\newtheorem{defn}[thm]{\protect\definitionname}
\theoremstyle{remark}
\newtheorem{rem}[thm]{\protect\remarkname}
\theoremstyle{plain}
\newtheorem{prop}[thm]{\protect\propositionname}
\theoremstyle{plain}
\newtheorem{lem}[thm]{\protect\lemmaname}
\theoremstyle{plain}
\newtheorem{cor}[thm]{\protect\corollaryname}
\numberwithin{equation}{section}
\numberwithin{figure}{section}
 \let\footnote=\endnote
\theoremstyle{definition}
\def\s{\sigma}
\def\L{\Lambda}
\def\R{\mathbb{R}}
\def\N{\mathbb{N}}
\def\Z{\mathbb{Z}}
\def\ep{\varepsilon}
\keywords{}
\subjclass[2000]{}
\def\s{\sigma}
\def\Si{\Sigma_\ell}
\def\L{\mathcal{L}}
\def\R{\mathbb{R}}
\def\P{\mathbb{P}}
\def\ep{\varepsilon}
\def\A{\mathcal{A}}
\def\M{\mathcal{M}}
\def\glr{\text{GL}_d(\R)}
\def\hol{H\"older }
\def\span{\text{Span}}
  \providecommand{\corollaryname}{Corollary}
  \providecommand{\definitionname}{Definition}
  \providecommand{\lemmaname}{Lemma}
  \providecommand{\propositionname}{Proposition}
  \providecommand{\remarkname}{Remark}
  \providecommand{\theoremname}{Theorem}
\providecommand{\theoremname}{Theorem}
\definecolor{lime}{HTML}{A6CE39}
\DeclareRobustCommand{\orcidicon}{
	\begin{tikzpicture}
	\draw[lime, fill=lime] (0,0) 
	circle [radius=0.16] 
	node[white] {{\fontfamily{qag}\selectfont \tiny ID}};
	\draw[white, fill=white] (-0.0625,0.095) 
	circle [radius=0.007];
	\end{tikzpicture}
	\hspace{-2mm}
}
\author[Reza Mohammadpour, Kiho Park]{Reza Mohammadpour\orcidA{}, Kiho Park}
\address{Department of Mathematics, Uppsala University, Box 480, SE-75106, Uppsala, Sweden}
\date{\today}
\subjclass[2010]{28A80,  37A44,  37D35,  37H15 }
\keywords{Quasi-multiplicativity, matrix cocycles, spannability, Bernoulli Property}%
\email{reza.mohammadpour@math.uu.se}
\address{School of Mathematics, KIAS, 85 Hoegiro, Dongdaemun-gu, Seoul, 02455, Republic of Korea.}
\email{kiho.park12@gmail.com}
\begin{document}
\title[uniform quasi-multiplicativity of locally constant cocycles]{uniform quasi-multiplicativity of locally constant cocycles and applications}
\maketitle

\begin{abstract}
In this paper, we show that a locally constant cocycle $\A$ is $k$-quasi multiplicative under the irreducibility assumption. More precisely, we show that if $\A^t$ and $\A^{\wedge m}$ are irreducible for every $t \mid d$ and $1\leq m \leq d-1$, then $\A$ is $k$-uniformly spannable for some $k\in \N$, which implies that $\A$ is $k$-quasi multiplicative. We apply our results to show that the unique subadditive equilibrium Gibbs state is $\psi$-mixing and calculate the Hausdorff dimension of cylindrical shrinking target and recurrence sets.
\end{abstract}

\section{Introduction and statement of the results}
A \textit{matrix cocycle} $\A$ on a compact metric space $X$ is a continuous map $\A \colon X \to \glr$ over a topological dynamical system $(X, T)$. For $n\in \N$ and $x \in X$, we define the product of $\A$ along the length $n$ orbit of $X$ as   
$$\A^n(x):= \A(T^{n-1}(x)) \ldots \A(x).$$

 The submultiplicativity of the norm $\|\cdot\|$ implies that  $\| \A \|$ is submultiplicative in the sense that for any $m, n \in \mathbb{N}$,
$$
0 \leq \|\mathcal{A}^{n+m}(x)\| \leq \|\mathcal{A}^{m}(T^{n}(x))\| \|\mathcal{A}^{n}(x)\|.
$$
Such submultiplicative sequence gives rise to a norm potential $\Phi_{\mathcal{A}}:=\left\{\log \| \A^{n}\| \right\}_{n \in \mathbb{N}}$.

Let $\ell\in \N$ be given.
The one-sided shift $\Si$ of $\ell$ symbols is a space $\{1,2,\ldots,\ell\}^\N$. Let $\sigma: \Si \to \Si$ be the left shift map defined by $\sigma i=i_{1} i_{2} \cdots$ for all $i=i_{0} i_{1} \cdots \in \Si$ and for simplicity, we denote it by $(\Si, \s)$. We will focus on locally constant cocycles $\A$, which are matrix cocycles $\A: \Si \to \glr$ over a one-sided shift $(\Si, \sigma)$ that depends only on the zero-th symbol $x_0$ of $x = (x_i)_{i\in \N}$. Assume that $(A_1, \ldots, A_{\ell})\in \glr^{\ell}$ generated a locally constant cocycle $\A \colon \Si \to \glr$. We say that $\A \colon \Si \to \glr$ is \textit{irreducible} if there does not exist a proper subspace $V \subset \R^d$ preserved such that $A_i V \subset V$ for $i=1, \ldots, \ell$.  We also say that $\A \colon \Si \to \glr$ is \textit{strongly irreducible} if there does not exist a finite collection $V_{1}, \ldots, V_{m}$ of non-zero proper subspaces $V_{j}$ such that $A_{i}\left(\bigcup_{j=1}^{m} V_{j}\right)=\bigcup_{j=1}^{m} V_{j}$ for every $i=1, \ldots, \ell$. 

For any length $n$ word $I=i_{0}\ldots  i_{n-1}$ (see Section \ref{Preliminary} for the definition), we denote
\[ \mathcal{A}_{I}:=A_{i_{n-1}}\ldots A_{i_{0}}.\]

Denoting by $\L$ the set of all finite words, we say a locally constant cocycle $\A: \Si \to \glr $ generated by $(A_1, \ldots, A_{\ell})$ is \textit{quasi-multiplicative} if there exist $k\in \N$ and $c>0$ such that for any $I,J \in \L$, there exists $K = K(I,J) \in \L$ with $|K| \leq k$ such that
\[
\|\A_{IKJ}\| \geq c\|\A_{I}\|\|\A_{J}\|.
\]

Notice that quasi-multiplicativity of $\A$ resembles Bowen’s specification property \cite{Bow74} in some respects. Feng \cite{feng2003lyapunov} showed that the quasi-multiplicativity property implies the uniqueness of the Gibbs equilibrium measure for the norm of the cocycle $\A.$ Feng \cite{feng09} also proved that if $\A$ is a locally constant $\glr$-cocycle over a full
shift generated by an irreducible set of matrices, then $\A$ is quasi-multiplicative. Recently, there has been further study in quasi-multiplicativity (see \cite{ KM18, park2020quasi, barany2021dynamically, Morris21} for instance).
Unfortunately, the
lack of control on the length of the connecting word $K$ from quasi-multiplicativity is a limitation in studying important applications such as Bernoulli property on a class of subadditive equilibrium
states and shrinking target and recurrence sets; see Section \ref{Applications}.  When the connecting word $K$ in the quasi-multiplicativity property has a fixed length $k\in \N$, we say $\A$ is $k$-\textit{quasi-multiplicative}; see Definition \ref{Def-k-qm}. 

There are a few results along this line in the literature.  In the same setting of locally constant cocycles,  B\'ar\'any and Troscheit \cite[Proposition 2.5]{barany2021dynamically} and Morris \cite[Theorem 7]{Morris21} proved that $\A$ is $k$-quasi-multiplicative when $\A$ is (strongly) irreducible and proximal. Note that if $\A$ is strongly irreducible, then $\A^t$ is irreducible for every $1\leq t \leq d.$ In this paper, we generalize their results. 
A distinction in our result from similar results is that we only require versions of irreducibility as our assumptions, while many recent similar results additionally need some form of proximality to obtain $k$-quasi-multiplicativity property. 
In fact, our result is inspired by a recent result of Bochi and Garibaldi \cite[Proposition 3.9]{bochi2019extremal} in a more general setting; see Remark \ref{remark-qm} for further comments on their work.

We say that a locally constant cocycle $\A \colon \Si \to \glr$ is \textit{$k$-uniformly spannable} (See Subsection \ref{subsection-span} for more information) if there exists $k \in \N$ such that for any nonzero vector $u \in \R^d$,  \[V_{u,k} = \R^d,\]
where $V_{u,k} := \span\{\A_Iu \colon I \in \L \text{ with } |I|=k \} \subseteq \R^d$.

We will also make use of the exterior product cocycle $\A^{\wedge m}$ for $1 \leq m \leq d-1$ where $\A^{\wedge m}(x)$ is considered as a linear transformation on $(\R^d)^{\wedge m}$. Our main result is as follows:

\begin{thm}\label{thm: main} Let $\A\colon \Si \to \glr$ be a locally constant cocycle. Suppose $\A^t$ and $\A^{\wedge m}$ are irreducible for every $t \mid d$ and $1\leq m \leq d-1$. Then $\A$ is $k$-uniformly spannable for some $k\in \N$.
\end{thm}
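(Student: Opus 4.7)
The plan is to prove uniform spannability by first reducing to a pointwise statement via compactness, then using Plücker images together with a Clifford-theoretic analysis of the stable subspaces to contradict the irreducibility hypotheses.

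First, set $U_k := \{[u] \in \P(\R^d) : V_{u,k} = \R^d\}$. From $\A_{Ij}u = A_j \A_I u$ one has $V_{u,k+1} = \sum_{j=1}^\ell A_j V_{u,k}$, so $\dim V_{u,k}$ is non-decreasing in $k$ (each $A_j$ is invertible) and $U_k \subseteq U_{k+1}$; moreover, $U_k$ is open because $u \mapsto \dim V_{u,k}$ is lower semi-continuous. By compactness of $\P(\R^d)$, it suffices to show $\bigcup_k U_k = \P(\R^d)$, i.e.\ that the stable dimension $d^\ast := \lim_k \dim V_{u,k}$ equals $d$ for every $u \neq 0$. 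Fix such a $u$ and assume for contradiction that $d^\ast < d$. For $k$ large, $\dim V_{u,k} = d^\ast$; since each $A_j V_{u,k} \subseteq V_{u,k+1}$ has the same dimension, we get $A_1 V_{u,k} = \cdots = A_\ell V_{u,k} = V_{u,k+1}$. Consequently, the action of $\Gamma := \langle A_1, \ldots, A_\ell\rangle$ on the orbit $\{V_{u,k}\}$ in $\mathrm{Gr}(d^\ast, \R^d)$ factors through a cyclic quotient $\Gamma/H$, where $H$ is the pointwise stabilizer of the orbit (a normal subgroup of $\Gamma$). Taking Plücker images $v_k := \iota(V_{u,k}) \in \bigwedge^{d^\ast}\R^d$, the relation $A_j^{\wedge d^\ast} v_k \in \R\, v_{k+1}$ makes $\mathrm{Span}\{v_k : k \geq 0\}$ a nonzero $\A^{\wedge d^\ast}$-invariant subspace; the irreducibility of $\A^{\wedge d^\ast}$ (valid for $1 \leq d^\ast \leq d-1$) then forces it to equal $\bigwedge^{d^\ast}\R^d$.

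Next I would apply Clifford's theorem to the $\Gamma$-irreducible module $\R^d$ restricted to $H$: it decomposes as $W_1 \oplus \cdots \oplus W_n$ into $H$-isotypic components cyclically permuted by $\Gamma/H$, with the permutation independent of the chosen generator $A_j$ and forming a single $n$-cycle by the $\Gamma$-irreducibility of $\R^d$; each $W_i$ has dimension $d/n$ and $n \mid d$. When $n \geq 2$, the Plücker images $\iota(W_i) \in \bigwedge^{d/n}\R^d$ are cyclically permuted up to scalars by $\A^{\wedge d/n}$, and the same spanning argument combined with irreducibility of $\A^{\wedge d/n}$ gives $n \geq \binom{d}{d/n}$, which together with $n \mid d$ and $n \leq d$ forces $n = d$. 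Then each $W_i$ is a line, so in the associated basis every $A_j$ is a monomial matrix (a single $d$-cycle composed with a diagonal); every length-$d$ product is therefore diagonal and preserves each $W_i$, making $\A^d$ reducible --- contradicting the hypothesis that $\A^d$ is irreducible (valid since $d \mid d$). When $n = 1$, $\R^d$ is $H$-isotypic of the form $W^{\oplus m}$, and using that $\Gamma/H$ is cyclic while $\R^d$ is $\Gamma$-irreducible one reduces the multiplicity $m$ to $1$, making $\R^d$ itself $H$-irreducible; but then the proper nonzero $H$-invariant subspace $V_{u,k}$ cannot exist --- again a contradiction.

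The main obstacle I anticipate is the handling of the $n = 1$ isotypic case, particularly over $\R$: real cyclic groups admit genuine $2$-dimensional irreducible rotation representations, so reducing the multiplicity $m$ to $1$ may require complexifying and tracking the Galois action, or a finer Mackey-type argument that brings in the remaining $\A^{\wedge m}$-irreducibility hypotheses to rule out the residual rotation case. The rest of the argument --- in particular the clean monomial matrix / $\A^d$-reducibility contradiction when $n \geq 2$ --- is essentially algebraic once the Clifford setup is in place.
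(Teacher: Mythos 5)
Your approach via Clifford theory is genuinely different from the paper's, and the $n\geq 2$ branch is roughly sound (though over-engineered), but the $n=1$ branch contains a real gap which you yourself flag, and your proposed repair is false as stated. Since the $n=1$ (isotypic) case is the main one — for instance it always occurs when $H$ is trivial or scalar — the proof is essentially incomplete.

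Concretely, the claim that ``$\Gamma/H$ cyclic together with $\Gamma$-irreducibility of $\R^d$ reduces the multiplicity $m$ to $1$'' is not true. Take $d=2$, $\Gamma = \langle -I, R_\alpha\rangle$ with $R_\alpha$ an irrational rotation, and $H=\{\pm I\}$. Then $\R^2$ is $\Gamma$-irreducible, $\Gamma/H$ is cyclic, yet $\R^2|_H$ is isotypic with $m=2$. So no amount of Galois/Mackey bookkeeping will rescue the statement in that generality; you would have to feed the exterior-power hypotheses in at exactly this point, and your outline does not say how. Separately, your $n\geq 2$ case does not need the bound $n\geq\binom{d}{d/n}$ nor the monomial-matrix argument: since all $A_j$ share the same image in $\Gamma/H$, they induce the same $n$-cycle on the isotypic components $W_1,\dots,W_n$; hence every length-$n$ product preserves each $W_i$, so $\A^n$ preserves a proper subspace, which directly contradicts irreducibility of $\A^n$ (legal because Clifford gives $n\mid d$). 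No Pl\"ucker images of the $W_i$ are required there.

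The paper avoids Clifford theory altogether. It first extracts, by a nested-intersection/compactness argument, a single $u$ with $V_k:=V_{u,k}$ proper for all $k$ and of constant, minimal dimension, shows that distinct $V_i,V_j$ either coincide or meet trivially, and then splits on whether the sequence $(V_k)$ is periodic. The periodic case with period $t$ gives $\R^d = V_1\oplus\cdots\oplus V_t$ (irreducibility of $\A$), hence $t\mid d$, and then $\A^t$ preserves $V_1$ — this is the clean form of your $n\geq 2$ argument. The aperiodic case is where your proof breaks and where the paper uses a short eigenvalue computation on the Pl\"ucker images $w_k := \iota(V_k)\in\bigwedge^{\gamma}\R^d$: since $A_i^{\wedge\gamma}w_k$ and $A_j^{\wedge\gamma}w_k$ are both proportional to $w_{k+1}$, every $w_k$ is an eigenvector of each $B_{i,j}:=(A_i^{\wedge\gamma})^{-1}A_j^{\wedge\gamma}$; the $w_k$ span $\bigwedge^{\gamma}\R^d$ by irreducibility of $\A^{\wedge\gamma}$ (this is your Pl\"ucker-span step), and one argues that this forces every $B_{i,j}$ to be a scalar, so all $A_i^{\wedge\gamma}$ are scalar multiples of one another, which is incompatible with irreducibility of $\A^{\wedge\gamma}$. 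You should study this eigenvalue argument — it is elementary and it is precisely where the $\A^{\wedge m}$ hypotheses earn their keep, the point your outline does not reach.
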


 As spannable cocycles are quasi-multiplicative,  the following corollary is immediate of the above theorem.

\begin{cor}\label{thm-k-qm}
Let $\A\colon \Si \to \glr$ be a locally constant cocycle. Suppose $\A^t$ and $\A^{\wedge m}$ are irreducible for every $t \mid d$ and $1\leq m \leq d-1$. Then $\A$ is $k$-quasi-multiplicative for some $k\in \N$.
\end{cor}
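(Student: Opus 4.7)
The plan is to deduce Corollary \ref{thm-k-qm} from Theorem \ref{thm: main} by showing that $k$-uniform spannability forces $k$-quasi-multiplicativity with the same $k$. Under the stated irreducibility hypotheses Theorem \ref{thm: main} already supplies some $k\in \N$ for which $\A$ is $k$-uniformly spannable, so all that remains is to convert this geometric spanning property into a multiplicative norm lower bound on the cocycle products.

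First I would extract a uniform separation constant from compactness. On the compact set $\mathbb{S}^{d-1}\times \mathbb{S}^{d-1}$ define
$$c_1(u,v) := \max_{|K|=k}\,|\langle v,\, \A_K u\rangle|.$$
By $k$-uniform spannability, for every unit $u$ the family $\{\A_K u : |K|=k\}$ spans $\R^d$, so no nonzero $v$ can be orthogonal to every member of this family; hence $c_1(u,v)>0$ pointwise. As a maximum of finitely many (namely $\ell^k$) continuous functions, $c_1$ is continuous, and compactness upgrades pointwise positivity to a uniform constant $c>0$ with $c_1(u,v)\geq c$ on $\mathbb{S}^{d-1}\times \mathbb{S}^{d-1}$.

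Next, given arbitrary $I,J\in \L$, I would align the freedom in $K$ with the top singular directions. Choose unit top right singular vectors $e_I$ and $v_J$ of $\A_I$ and $\A_J$, so that $\|\A_I e_I\|=\|\A_I\|$ and $\|\A_J v_J\|=\|\A_J\|$. Set $u := \A_I e_I/\|\A_I\|\in \mathbb{S}^{d-1}$. Applying the uniform lower bound at $(u,v_J)$ yields a word $K$ with $|K|=k$ and $|\langle v_J,\A_K u\rangle|\geq c$. Writing the SVD $\A_J = U\Sigma V^T$ with $v_J$ as the first column of $V$, a direct expansion gives
$$\|\A_J \A_K u\|^2 \;=\; \sum_{i=1}^{d}\sigma_i^{2}\,|(V^T \A_K u)_i|^{2} \;\geq\; \|\A_J\|^{2}\,|\langle v_J, \A_K u\rangle|^{2} \;\geq\; c^{2}\,\|\A_J\|^{2}.$$
Since $\A_{IKJ}=\A_J\A_K\A_I$ and $\A_I e_I = \|\A_I\|\,u$, this combines to
$$\|\A_{IKJ}\| \;\geq\; \|\A_J\A_K\A_I e_I\| \;=\; \|\A_I\|\cdot \|\A_J\A_K u\| \;\geq\; c\,\|\A_I\|\,\|\A_J\|,$$
which is precisely $k$-quasi-multiplicativity with the constant $c$ furnished above.

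The only delicate step is the passage from the pointwise positivity of $c_1(u,v)$ to a uniform positive lower bound; this is where compactness of $\mathbb{S}^{d-1}\times \mathbb{S}^{d-1}$ and continuity of the finite maximum are essential. Everything else is an elementary SVD estimate exploiting the freedom—guaranteed by spannability—to align $\A_K u$ with the top right singular direction of $\A_J$, together with the ordering convention $\A_{IKJ}=\A_J\A_K\A_I$ that makes the two ``end products'' sit on the outside.
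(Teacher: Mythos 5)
Your proof is correct and follows the same route as the paper: invoke Theorem \ref{thm: main} to obtain $k$-uniform spannability and then upgrade this to $k$-quasi-multiplicativity. The paper delegates the second step to Proposition \ref{spannable-QM} (whose proof sketch cites \cite{butler2021thermodynamic} and \cite{park2020quasi}), whereas you supply a self-contained compactness and inner-product argument; the two are equivalent, since bounding $|\langle v_J,\A_K u\rangle|$ away from zero is the unnormalized version of the angle estimate $\measuredangle(\A_K u,(v_J)^\perp)\geq\ep$ used there.
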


Corollary \ref{thm-k-qm} has nice applications in subadditive thermodynamic formalism and number theory, which we discuss in more detail in Section \ref{Applications}.

The paper is organized as follows. In Section \ref{Preliminary}, we introduce relevant notations and prove Corollary \ref{thm-k-qm}. In Section \ref{proof of the main thm}, we prove Theorem \ref{thm: main}, and in Section \ref{Applications}, we show some applications of Corollary \ref{thm-k-qm}.
\subsection{Acknowledgements.}
The authors would like to express their gratitude to the anonymous referee for their valuable corrections and suggestions, which greatly contributed to the improvement of the paper. R. Mohammadpour was supported by the Knut and Alice Wallenberg Foundation. 

\section{Preliminary}\label{Preliminary}
\subsection{Set up}

For each $n\in\N$, we define $\L(n)$ to be the set of all length $n$ words of $\Si$, and we define $\L:=\bigcup\limits_{n\in \N}\L(n)$ to be the set of all words. If $i=i_{0} i_{1} \cdots \in \L$, then we define $\left.i\right|_{n}=i_{0} \cdots i_{n-1}$ for all $n \in \mathbb{N}$. The empty word $\left.i\right|_{0}$ is denoted by $\varnothing$. The length of $i \in \L$ is denoted by $|i|$. The longest common prefix of $i, j \in \L \cup \Si $ is denoted by $i \wedge j$. The concatenation of two words $i \in \L \cup \Si$ and $j \in \L $ is denoted by $ji$.  If $i \in \L(n)$ for some $n$, then we set $[i]=\left\{j \in \Si:\left.j\right|_{n}=i\right\}$. The set $[i]$ is called a \textit{cylinder set}. A cylinder containing $x=\left(x_{i}\right)_{i \in \mathbb{Z}} \in \Si$ of length $n \in \mathbb{N}$ is defined by
$[x]_{n}:=\left\{\left(y_{i}\right)_{i \in \mathbb{N}} \in \Si : x_{i}=y_{i} \text { for all } 0 \leq i \leq n-1 \right\}.$  The shift space $\Si$ is compact in the topology
generated by the cylinder sets. Moreover, the cylinder sets are open and closed in
this topology and they generate the Borel $\sigma$-algebra. We denote by $\M(\Si, \sigma)$ the space of all $\sigma$-invariant Borel probability measures
on $\Si$.

\subsection{Quasi-multiplicativity and Spannability}\label{subsection-span}

\begin{defn}We say a locally constant cocycle $\A \colon \Si \to \glr$ is \textit{quasi-multiplicative} if there exist $k\in \N$ and $c>0$ such that for any $I,J \in \L$, there exists $K = K(I,J) \in \L$ with $|K| \leq k$ such that
\begin{equation}\label{eq: QM}
\|\A_{IKJ}\| \geq c\|\A_{I}\|\|\A_{J}\|.
\end{equation}
\end{defn}

\begin{defn}\label{Def-k-qm}
We say a locally constant cocycle $\A \colon \Si \to \glr$ is \textit{$k$-quasi-multiplicative} for some $k\in \N$ if there exists $c>0$ such that for any $I,J \in \L$, there exists $K = K(I,J) \in \L(k)$ such that \eqref{eq: QM} holds.
\end{defn}

We will elaborate more on the applications of $k$-quasi-multiplicativity in Section \ref{Applications}. In the remaining part of this subsection, we describe a notion of spannability, that is closely related to quasi-multiplicativity. 
In what follows, we will repeatedly make use of the following notation: given a locally constant cocycle $\A$, a vector $u \in \R^d$ and an integer $k\in \N$, we define 
\begin{equation}\label{eq: V_uk}
V_{u,k} := \span\{\A_Iu \colon I \in \L(k)\} \subseteq \R^d.
\end{equation}

\begin{defn}We say a locally constant cocycle $\A \colon \Si\to \glr$ is \textit{spannable} if for any nonzero vector $u \in \R^d$, there exists $k = k(u) \in \N$ such that $V_{u,k} = \R^d$.
If $k = k(u)$ can be chosen uniformly in $u$, then we say $\A$ is \textit{$k$-uniformly spannable}.
\end{defn}
\begin{rem}
We note that if $V_{u,k}$ is equal to the entire subspace $\R^d$ for some $u\in \R^d$ and $k \in \N$, from continuity so does $V_{v,k}$ for all $v \in \R^d$ in a small neighborhood of $u$.
Moreover, if $V_{u,k}$ is equal to $\R^d$, then so does $V_{u,k+1}$. In particular, if $\A$ is $k$-uniformly spannable for some $k$, then it is $k'$-uniformly spannable for any $k' \geq k$.
\end{rem}

Throughout the paper, when we measure the angle between nonzero vectors, we mean the angle between the lines spanned by the vectors.  Similarly, when we measure an angle between a nonzero vector $v$ and a hyperplane $\mathbb{W}$, we mean the minimum angle $\measuredangle(v, w)$ over all $w \in \mathbb{W} \backslash\{0\}$.  The following statement can be found in \cite[Proposition 8]{butler2021thermodynamic} which states that spannability implies quasi-multiplicativity.

\begin{prop}\label{spannable-QM} Suppose a locally constant cocycle $\A \colon \Si\to\glr$ is $k$-uniformly spannable, then $\A$ is $k$-quasi-multiplicativite.
\end{prop}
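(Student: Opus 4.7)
The plan is to produce the connecting word $K$ directly from $k$-uniform spannability and then extract a uniform constant $c>0$ by a compactness argument on the unit sphere. The point is that once spannability guarantees, for each unit vector $u$, that the orbit $\{\A_K u : K\in \L(k)\}$ linearly spans $\R^d$, continuity plus compactness upgrade this pointwise non-degeneracy to a single uniform quantitative lower bound.

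To set notation, for any word $W\in \L$ fix a unit right singular vector $u_W\in\R^d$ of $\A_W$ corresponding to the largest singular value, so that $\|\A_W u_W\|=\|\A_W\|$ and, by the singular value decomposition, for every $y\in\R^d$,
\[
\|\A_W y\| \;\ge\; \|\A_W\|\,|\langle u_W, y\rangle|.
\]
Given $I,J\in\L$, set the unit vector $\hat w := \A_I u_I/\|\A_I\|$ and consider
\[
f(u,v) := \max_{K\in \L(k)} |\langle v, \A_K u\rangle|, \qquad (u,v)\in S^{d-1}\times S^{d-1}.
\]
Since $\L(k)$ is finite, $f$ is continuous, and $k$-uniform spannability says no unit $v$ can be orthogonal to every $\A_K u$ with $K\in\L(k)$; hence $f>0$ everywhere, and compactness of $S^{d-1}\times S^{d-1}$ yields $c := \min f > 0$. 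Applying this at $(u,v)=(\hat w, u_J)$ produces a word $K\in \L(k)$ with $|\langle u_J,\A_K \hat w\rangle|\ge c$.

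Finally, under the paper's convention $\A_I = A_{i_{n-1}}\cdots A_{i_0}$ we have $\A_{IKJ} = \A_J \A_K \A_I$, so chaining the estimates gives
\[
\|\A_{IKJ}\| \;\ge\; \|\A_J \A_K \A_I u_I\| \;=\; \|\A_I\|\cdot\|\A_J \A_K \hat w\| \;\ge\; \|\A_I\|\,\|\A_J\|\,|\langle u_J, \A_K \hat w\rangle| \;\ge\; c\,\|\A_I\|\,\|\A_J\|,
\]
which is the $k$-quasi-multiplicativity inequality with $|K|=k$. The only non-routine ingredient is the compactness step producing the single constant $c$; the remaining estimates are the singular value inequality applied to $\A_I$ and $\A_J$, together with the definition of $\hat w$.
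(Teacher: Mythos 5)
Your argument is correct and follows essentially the same route as the paper's (which sketches the proof by reference to \cite[Proposition 8]{butler2021thermodynamic} and \cite[Lemma 4.5]{park2020quasi}): pick top singular vectors, invoke spannability plus compactness of the sphere to get a uniform lower bound on the relevant inner product, then chain singular-value inequalities. The only difference is that you make the two steps the paper delegates to references fully explicit — the compactness argument producing $c = \min f > 0$ and the SVD inequality $\|\A_W y\| \geq \|\A_W\|\,|\langle u_W, y\rangle|$ — so your version is self-contained but not a genuinely different proof.
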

\begin{proof} 
The proof is similar to \cite[Proposition 8]{butler2021thermodynamic} for fiber-bunched cocycles. We give a sketch of the proof here for
the convenience of the readers.

For any $A \in \glr$, let $v_{A,1}\in \R^d$ be a unit vector such that $\|A v_{A,1}\| = \|A\|$, and let $v_{A,2}$ be the unit vector in the direction of $A v_{A,1}$. 

From $k$-uniform spannability,  we begin by finding $\ep>0$ such that for given arbitrary $I,J \in \L$,  we can find $K\in \L(k)$ such that $\measuredangle(\A_Kv_{\A_I,2}, (v_{\A_J,1})^\perp ) \geq \ep$. By \cite[Lemma 4.5]{park2020quasi}, that translates to the existence of $c>0$ satisfying the $k$-quasi-multiplicativity \eqref{eq: QM}.
\end{proof}

\begin{proof}[Proof of Corollary \ref{thm-k-qm}]
It follows from the combination of Theorem \ref{thm: main} and Proposition \ref{spannable-QM}.
\end{proof}

We end this subsection by commenting on a class of cocycles that generalize the class of locally constant cocycles and by comparing how the notions defined above relates to such cocycles.
\begin{rem}\label{remark-qm} Beyond locally constant cocycles, there exists a subset of \hol continuous cocycles that are nearly conformal. We call them \textit{fiber-bunched cocycles}. The most useful property of fiber-bunched cocycles is the existence of holonomies and often we tend to think them as generalizations of locally constant cocycles; see \cite{bonatti2004lyapunov, park2020quasi, Moh22}. 

The properties introduced above, such as quasi-multiplicativity and spannability, can be successfully extended, and sufficient conditions have been found which imply such properties. For instance, above mentioned result of Feng \cite{feng2003lyapunov} on the uniqueness of the equilibrium state using quasi-multiplicativity remains to hold for fiber-bunched cocycles. Moreover, Park \cite{park2020quasi} showed that typicality, an assumption introduced by Bonatti and Viana \cite{bonatti2004lyapunov} to replicate the effect of strong irreducibility and non-compactness from the classical work of Furstenberg \cite{furstenberg1963noncommuting} implies quasi-multiplicativity. The $k$-uniform spannability introduced above was motivated by the work of Bochi and Garibaldi \cite{bochi2019extremal} who showed that irreducibility, along with an extra assumption on how close the cocycle is from being conformal, implies uniform spannability. They use the term \textit{uniform spannability}, when translated to our setting of locally constant cocycles, to roughly mean $\bigcup\limits_{k=1}^nV_{u,k} = \R^d$ for some $n \in \N$. In order to distinguish from their version of uniform spannability, we have decided to use \textit{$k$-spannability} to denote the stronger statement that $V_{u,k}=\R^d$.
\end{rem}

\section{Proof of Theorem \ref{thm: main}}\label{proof of the main thm}
In this section, we will prove Theorem \ref{thm: main}. Whenever we write $V = W$ for two $m$-dimensional subspaces of $\R^d$, we mean they are equal considered as elements of the Grassmannian $Gr(m,\R^d)$.
Moreover, for $I \in \L$ we define 
$$\A_IV:=\span\{\A_Iv \colon v \in V\}.$$

\begin{proof}[Proof of Theorem \ref{thm: main}]
Suppose on the contrary that there does not exist $k\in \N$ such that $\A$ is $k$-uniformly spannable, meaning that for every $k\in \N$ there exists $u=u_k \in \R^d$ such that $V_{u,k}$ is a proper subspace of $\R^d$.
Defining an open set
$$S_k:=\{u \in \R^d \colon V_{u,k} = \R^d\},$$ this means that its complement $T_k:=\R^d \setminus S_k$ is a closed non-empty set for every $k\in \N$. Moreover, it is clear from the definition that $S_k \subseteq S_{k+1}$, meaning that $T_k$ satisfies the reverse inclusion $T_{k+1} \subseteq T_{k}$. 
Therefore, the nested intersection $\displaystyle \bigcap\limits_{k \in \N} \P T_{k}$ is necessarily non-empty. In particular, we can choose a vector $u\in \R^d$ which belongs to $T_k$, meaning that $V_k:=V_{u,k}$ is a proper subspace of $\R^d$, for every $k\in \N$.

As $n \mapsto \dim(V_n)$ is a bounded non-decreasing function, the dimension of $V_k$ has to stabilize to some $m \in \N$ strictly smaller than $d$. By dropping the first few subspaces from the sequence $\{V_k\}_{k\in \N}$ we may assume that the dimension of $V_k$ is equal to $\gamma$ for all $k \in \N$. Moreover, from the definition \eqref{eq: V_uk} of $V_k$, we have
\begin{equation}\label{eq: V relation}
V_{k+n} = \A_IV_k = \A_JV_k
\end{equation}
for every $k,n\in \N$ and $I,J \in \L(n)$.
This is the defining characteristic of the sequence $\{V_k\}_{k\in \N}$. Now, by choosing a possibly different sequence of subspaces we may assume that the common dimension $\gamma$ of $\{V_k\}_{k\in \N}$ is the smallest such number; that is, if $\{W_k\}_{k\in \N}$ is another sequence of subspaces of common dimension satisfying \eqref{eq: V relation}, then its dimension is at least $\gamma$.

\begin{lem} For any $i \neq j$, the subspaces $V_i$ and $V_j$ either coincide or intersect trivially.
\end{lem}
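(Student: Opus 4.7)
My plan is to derive a contradiction with the minimality of $\gamma$. Assume for contradiction that there exist $i \neq j$ with $V_i \neq V_j$ and $V_i \cap V_j \neq \{0\}$. Without loss of generality take $i<j$ and set $p := j-i \geq 1$. I will construct an auxiliary sequence $\{W_k\}_{k\in\N}$ of proper subspaces of $\R^d$ that satisfies \eqref{eq: V relation} and whose common dimension is strictly positive yet strictly less than $\gamma$, contradicting the choice of $\gamma$ as minimal.

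The natural candidate is $W_k := V_k \cap V_{k+p}$. The key calculation will use that each $\A_I\in \glr$ is invertible, so it preserves intersections: for any $n\in\N$ and any $I\in\L(n)$,
\[
\A_I W_k \;=\; \A_I(V_k \cap V_{k+p}) \;=\; \A_I V_k \cap \A_I V_{k+p} \;=\; V_{k+n} \cap V_{k+n+p} \;=\; W_{k+n},
\]
where the middle equality uses \eqref{eq: V relation} applied to each of $V_k$ and $V_{k+p}$. Since the right-hand side is independent of $I$, this shows both that $\{W_k\}_{k\in\N}$ satisfies \eqref{eq: V relation} and, because $\A_I|_{W_k}\colon W_k \to W_{k+n}$ is then a linear isomorphism, that $\dim W_k$ is constant in $k$; call this constant $\delta$.

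Finally, I will verify that $0 < \delta < \gamma$. Positivity is immediate from $\delta = \dim(V_i \cap V_j)$ and the assumption $V_i \cap V_j \neq \{0\}$. For the upper bound, note that $V_i \cap V_j \subsetneq V_i$, since otherwise $V_i \subseteq V_j$ combined with $\dim V_i = \dim V_j = \gamma$ would force $V_i = V_j$, contradicting our assumption. Hence $\delta < \gamma$. Thus $\{W_k\}_{k\in\N}$ is a sequence of nonzero proper subspaces of common dimension $\delta<\gamma$ satisfying \eqref{eq: V relation}, contradicting the minimality of $\gamma$. I do not anticipate any genuine obstacle: the entire argument is bookkeeping around the invertibility of $\A_I$ (to pass intersections through images) and the minimality of $\gamma$.
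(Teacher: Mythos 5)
Your proof is correct and takes essentially the same approach as the paper: both build an auxiliary sequence of proper subspaces from the nontrivial intersection $V_i \cap V_j$, propagate it forward using the invertibility of the $\A_I$ (which lets intersections pass through images), and contradict the minimality of $\gamma$. You are a bit more explicit than the paper in verifying $0 < \delta < \gamma$, but the underlying idea is identical.
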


\begin{proof}
Suppose there exists $i \neq j$ such that $W:=V_i \cap V_j$ is a non-trivial proper subspace of both $V_i$ and $V_j$. Then for any $n\in \N$ and $I \in \L(n)$ we have 
$$\A_IW = \A_I(V_i \cap V_j) = V_{i+n} \cap V_{j+n},$$
where the resulting subspace $V_{i+n} \cap V_{j+n}$ does not depend on $I$. In particular, this allows us to define a sequence of subspace $\{W_k\}_{k \in \N}$ satisfying \eqref{eq: V relation} where $W_k:=\A_IW$ for any $I \in \L(k)$, but this is a contradiction to the choice of $\{V_k\}_{k \in \N}$ being the sequence of subspaces of the smallest dimension satisfying \eqref{eq: V relation}.
\end{proof}

Following the lemma, we will now consider two separate cases, and conclude that both cases will lead to a contradiction, and hence deduce that $\A$ must be $k$-uniformly spannable for some $k \in \N$.\\

\noindent \textbf{Case 1}: The first case is when $V_{i}= V_{j}$ for some distinct $i,j\in \N$. Since every $V_k$ has the same dimension, the sequence $\{V_k\}_{k\in \N}$ must be periodic with some period $t\in \N$; that is, $t\in \N$ is a smallest integer such that $V_1 = V_{t+1}$. Moreover, $V_i$ and $V_j$ have a trivial intersection for distinct $1 \leq i,j \leq t$. Recalling that each $V_k$ has dimension $\gamma$, the subspace $W:=\span\{v \in V_i \colon i = 1,2,\ldots, t\}$ is a $\gamma t$-dimensional subspace of $\R^d$ preserved under $\A$. Since $\A$ is irreducible from the assumption, this implies that $W$ must be the entire subspace $\R^d$, and that $t$ divides $d$. However, this implies that $\A^t$, which preserves a nontrivial subspace $V_1$ (or any one of $V_k$), is reducible; a contradiction. \\
 
\noindent \textbf{Case 2}: The second case is any two subspaces from $\{V_k\}_{k \in\N}$ have a trivial intersection for any distinct $i,j \in \N$, which is the choice $m=\gamma$. We will show that we also arrive at a contradiction in this case.

We begin by choosing a decomposable vector $w_k = v_{1,k} \wedge \ldots \wedge v_{m,k} \in (\R^d)^{\wedge m}$ where $\span\{v_{1,k} , \ldots , v_{m,k}\}$ coincides with $V_k$. Since $A_iV_k = A_jV_k = V_{k+1}$ from \eqref{eq: V relation} for any $A_i,A_j$ in the image of $\A$, decomposable vectors $A_i^{\wedge m}w_k$ and $w_{k+1}$ differ by a multiplicative constant.
Moreover, each $w_k$ is an eigenvector of $B_{i,j}:=(A_i^{\wedge m})^{-1}A_j^{\wedge m}$. We now fix any $i \neq j$ such that $B:=B_{i,j}$ is not a scalar multiple of the identity transformation.  Such choice of $i,j$ is possible because otherwise every $A_i^{\wedge m}$ would be a scalar multiple of one another, which contradicts the assumption that $\A^{\wedge m}$ is irreducible for all $1\leq m\leq d-1$. Now let $\lambda_k$ be the corresponding eigenvalue of $w_k$.

Considered as a subspace of $(\R^d)^{\wedge m}$, let $W_k:=\span\{w_1,\ldots,w_k\}$ for each $k \in \N$. We claim that the subspace $W_N$ where $N:={d \choose m}$ is equal to the entire $(\R^d)^{\wedge m}$ because otherwise there would exist some $k < N$ such that $W_{k} = W_{k+1}$, meaning that $w_{k+1}$ belongs to $ W_{k}$. However, this would imply that $\A^{\wedge m}$ preserves a proper subspace $W_k$ of $(\R^d)^{\wedge m}$ because we can inductively show that $w_l$ (which is a scalar multiple of $A_i^{\wedge m}w_{l-1}$ for any $A_i$) belongs to $W_{k}$ for every $l>k$, and this contradicts the assumption that $\A^{\wedge m}$ is irreducible for all $1 \leq m\leq d-1$. 

Therefore, $W_N$ coincides with $(\R^d)^{\wedge m}$, and $\{w_1,\ldots, w_N\}$ forms a basis of $(\R^d)^{\wedge m}$. Now choose for any $l>N$, so that $w_l$ can be written as
$$w_l = c_1w_1 +\ldots + c_Nw_N.$$

Applying $B$ on both sides and using the fact that each $w_k$ is an eigenvector of $B$ with eigenvalue $\lambda_k$, we get $\lambda_lw_l = \sum\limits_{i=1}^N c_i\lambda_iw_i$. Substituting the above equation for $w_l$ into $\lambda_lw_l$ and equating coefficients gives $\lambda_l = \lambda_1 = \ldots =\lambda_N$, and this means that $B$ must be a scalar multiple of the identity transformation. However, it is a contradiction to the choice of $B$.\\

Since both cases lead to a contradiction, this completes the proof.
\end{proof}

\section{Applications}\label{Applications}
This section is devoted to showing some applications of our main theorems.
\subsection{Gibbs matrix equilibrium states have the Bernoulli property}

For any matrix cocycle $\A: X \to \glr$ over a topological dynamical system $(X, T)$, and $s\geq 0$, the potential $\Phi_{\A}^{s}:=\{s \log \|\A^n\|\}_{n \in \N}$ is subadditive. Therefore, the theory of subadditive thermodynamic formalism applies. For instance, the subadditive variational principle (see \cite{CFH08}) states that
$$
P\left(\Phi_{\mathcal{A}}^{s}\right)=\sup _{\mu \in \mathcal{M}(X, T)}\left\{h_{\mu}(T)+s \lim _{n \rightarrow \infty} \frac{1}{n} \int \log \|\A^n(x)\| d \mu\right\}.
$$
Any invariant measures achieving the supremum are called the \textit{equilibrium states} of $\Phi_{\mathcal{A}}^{s}$. If the entropy map is upper semi-continuous, the supremum is always attained. The thermodynamic interpretation of the parameter $s$ is as an inverse temperature of a system (see \cite{ru}), while the equilibrium measure $\mu_{s}$ of  $\Phi_{\mathcal{A}}^{s}$ describes the equilibrium of the system at temperature $1 / s$. The $s \rightarrow \infty$ limit is therefore a \textit{zero temperature limit}, and an accumulation point of the $\mu_{s}$ can be interpreted as a ground state (see, e.g., \cite{C, Je,  Jenkinson-MU, Bremont, Morris-zero, Moh20a}).

Feng and K\"aenm\"aki \cite{FK11} showed that if $\A$ is a locally constant $\glr$-cocycle over a full
shift generated by an irreducible set of matrices, then $\Phi_{\A}^{s}$
 has a unique equilibrium state $\mu_s$ for all $s\geq 0$. Moreover, $\mu_s$ has the subadditive
\textit{Gibbs property}: there exists $C_0>1$ such that for any $x\in \Si$ and $n\in \N$, we have
\[ C_{0}^{-1} \leq \frac{\mu_{s}([x]_n)}{e^{-nP(\Phi_{\A}^{s})}\|\A^n (x)\|^{s}}\leq C.\]
Note that if $\mu$ satisfies a Gibbs inequality with respect to $\Phi_{\A}^{s}$, then $\mu$ is fully supported on $\Si$.

Let $\s: \Si \to \Si$ be a left shift map. We say that an invariant measure $\mu \in \M(\Si, \s)$ is \textit{totally ergodic} if for every $n \in \N$, $\mu$ is ergodic with respect to $\s^n$.  We also define $\hat{\Si}:=\{1,2,\ldots,\ell\}^\Z$ equipped with a norm $d$, $\hat{\sigma}\left(\left(x_{k}\right)_{k \in \mathbb{Z}}\right):=\left(x_{k+1}\right)_{\mathbb{Z}}$ and $\M(\hat{\Si}, \hat{\sigma})$ the space of all $\hat{\sigma}$-invariant Borel probability measures on $\hat{\Si}$.

 We define the natural projection $\pi: \hat{\Si} \rightarrow \Si$ by $\pi\left(\left(x_{k}\right)_{k \in \mathbb{Z}}\right):=\left(x_{k}\right)_{k=0}^{\infty}$ that is continuous and surjective. Clearly $\hat{\mu} \mapsto \pi_{*} \hat{\mu}$ defines a continuous function $\M(\hat{\Si}, \hat{\sigma}) \rightarrow \M(\Si, \sigma)$ and since shift-invariant measures on $\Si$ and on $\hat{\Si}$ are both characterized by their values on cylinder sets this map is bijective. Let $\mu \in \M(\Si, \sigma)$, we will write $\hat{\mu}$ for the unique element of $\M(\hat{\Si}, \hat{\sigma})$ such that $\mu=\pi_{*} \hat{\mu}$, and we call $\hat{\mu}$ the natural extension of the measure $\mu$. Since properties such as mixing, ergodicity and total ergodicity can be characterized in terms of correlations between cylinder sets it is not difficult to see that each of those properties holds for an invariant measure $\mu \in \M(\Si, \sigma)$ if and only if the corresponding property holds for $\hat{\mu} \in \mathcal{M}(\hat{\Si}, \hat{\sigma})$.   We say that a measure $\hat{\mu}$ on $\hat{\Si}$ is a \textit{Bernoulli measure} if it has the form $\hat{\mu}=\left(\sum_{i=1}^{\ell} p_{i} \delta_{i}\right)^{\mathbb{Z}}$ for some probability vector $\left(p_{1}, \ldots, p_{\ell}\right)$ and that $\hat{\mu}$ has the \textit{Bernoulli property} if there exist a Bernoulli measure $\hat{v}$ on $\hat{\Si}$ and a measure-space isomorphism $\phi: \hat{\Si} \rightarrow \hat{\Si}$ such that $\phi \circ \hat{\sigma}=\hat{\sigma} \circ \phi$ and $\phi_{*} \hat{\mu}=\hat{v}$ (See \cite[Section 7]{Morris21} for more details). It is clear that every Bernoulli measure has the Bernoulli property, but the reverse is in general false. 

Morris \cite{Mor18, Mor19} showed that total
ergodicity of equilibrium states implies mixing and that the failure of mixing can
be characterized by certain structures of the cocycle. Recently, he \cite{Morris21} improved his own result
by showing that total ergodicity implies the Bernoulli property. Piraino \cite[Theorem 3.3]{Piraino20} showed that for any $s \geq 0$, 
a unique Gibbs state for $\A$, $\mu_s$, has the Bernoulli property when $\A$ is proximal and strongly irreducible.  By using Corollary \ref{thm-k-qm}, we generalize their results.
\begin{thm}\label{mixing and Bernoulli}
Let $\A\colon \Si \to \glr$ be a locally constant cocycle. Suppose that $\A^t$ and $\A^{\wedge m}$ are irreducible for every $t \mid d$ and $1\leq m \leq d-1$. Then for any $s>0$,
the unique Gibbs state $\mu_s$ for the norm potential $\Phi_{\A}^s$  is $\psi$-mixing:
$$
\lim _{n \rightarrow \infty} \sup _{I, J \in \L}\left|\frac{\mu_{s}\left([I] \cap \sigma^{-n-|I|}[J]\right)}{\mu_{s}([I]) \mu_{s}([J])}-1\right|=0,
$$
and its natural extension $\hat{\mu}$ has the Bernoulli property.
\end{thm}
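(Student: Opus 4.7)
The strategy is to combine Corollary~\ref{thm-k-qm}, which under the given hypotheses yields $k$-quasi-multiplicativity of $\A$ for some $k \in \N$, with the subadditive Gibbs property of $\mu_s$ established by Feng and K\"aenm\"aki~\cite{FK11}. Once these two ingredients are in hand, the proof essentially follows the pipelines of Piraino~\cite{Piraino20} and Morris~\cite{Morris21}, with $k$-quasi-multiplicativity playing the role that proximality plus strong irreducibility plays in their hypotheses.

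The first step is a uniform two-sided quasi-independence estimate: for all $n\geq k$ and all $I,J\in\L$,
\[ c_1 \leq \frac{\mu_s([I]\cap \sigma^{-n-|I|}[J])}{\mu_s([I])\mu_s([J])} \leq c_2. \]
The upper bound follows from submultiplicativity of the norm and the Gibbs property, together with the partition sum $\sum_{|W|=n}\|\A_W\|^s \asymp e^{nP(\Phi_\A^s)}$. For the lower bound, I would fix $I,J$ and, for each $W' \in \L(n-k)$, apply $k$-quasi-multiplicativity to the pair $(IW', J)$ to obtain a connecting word $K = K(IW', J) \in \L(k)$ with $\|\A_{IW'KJ}\| \geq c\|\A_{IW'}\|\|\A_J\|$. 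Restricting $\sum_{|W|=n}\mu_s([IWJ])$ to the injectively parameterized subfamily $W = W'K(IW',J)$, applying the Gibbs lower bound, and invoking the identity $\sum_{|W'|=n-k}\|\A_{IW'}\|^s \asymp e^{(n-k)P(\Phi_\A^s)}\|\A_I\|^s$ (which itself follows from Gibbs and $\sum_{|W'|=n-k}\mu_s([IW']) = \mu_s([I])$) yields the required lower comparison.

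To upgrade this two-sided comparability to the quantitative $\psi$-mixing statement (ratio converging to $1$ uniformly over all $I,J$), I would invoke the transfer operator argument of \cite{Piraino20, Morris21}. Attach to $\Phi_\A^s$ a Ruelle-type transfer operator on a suitable cone of H\"older continuous functions on $\P(\R^d)$; here $k$-quasi-multiplicativity substitutes for proximality plus strong irreducibility in verifying the aperiodicity of the induced cocycle action and a Doeblin--Fortet inequality on this cone. Together these give a spectral gap, hence exponential decay of correlations, from which $\psi$-mixing follows by standard estimates. For the Bernoulli conclusion, observe that $\psi$-mixing implies total ergodicity of $\mu_s$; Morris's theorem~\cite{Morris21}, which in this cocycle setting requires only total ergodicity of the Gibbs state, then delivers the Bernoulli property of the natural extension $\hat{\mu}_s$.

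The principal obstacle is the spectral-gap step underlying the $\psi$-mixing proof. The two-sided comparability is fairly directly obtainable from $k$-quasi-multiplicativity and the Gibbs property, but refining it to ratio $\to 1$ requires a genuine spectral analysis of the transfer operator. The technical heart of the argument is to verify that $k$-quasi-multiplicativity, as provided by Corollary~\ref{thm-k-qm}, really does serve as an effective replacement for the proximality-plus-strong-irreducibility hypotheses used in \cite{Piraino20, Morris21}, so that the aperiodicity and Doeblin--Fortet inputs to the spectral argument remain valid under our weaker assumptions.
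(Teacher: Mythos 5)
Your lower-bound computation — fixing $I,J$, summing over $W'\in\L(n-k)$, applying $k$-quasi-multiplicativity to each pair $(IW',J)$, and invoking the Gibbs property — is essentially the paper's argument and establishes $\mu_s([I]\cap\sigma^{-n-|I|}[J])\geq\kappa\,\mu_s([I])\mu_s([J])$ for all $n$ large, hence total ergodicity of $\mu_s$. You do not need the two-sided comparability you mention; the lower bound alone suffices.

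The gap is in how you handle the $\psi$-mixing conclusion. You treat $\psi$-mixing as something that must be proved separately, via a Ruelle transfer operator on a cone of H\"older functions over $\P(\R^d)$, a Doeblin--Fortet inequality, and a spectral gap — and you explicitly flag this spectral-gap step as "the principal obstacle," without resolving it. That step is unnecessary. Morris's Theorem~1 in \cite{Morris21}, which you cite only for the Bernoulli conclusion, in fact delivers \emph{both} $\psi$-mixing and the Bernoulli property of the natural extension once one knows that the unique Gibbs state is totally ergodic. So the correct structure is: lower-bound estimate $\Rightarrow$ total ergodicity $\Rightarrow$ (Morris's Theorem~1) $\psi$-mixing and Bernoulli. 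Your proposal inverts this chain, trying to establish $\psi$-mixing first by redoing the spectral analysis that Morris's theorem already packages, and then deriving total ergodicity as a consequence. Because you leave the spectral-gap step open, the proposal as written does not close; the fix is simply to recognize the full strength of the cited theorem and skip the transfer-operator machinery entirely.
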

\begin{proof}
Denote $\mu:=\mu_{s}.$ Since $\mu$ is the unique Gibbs equilibrium state for $\Phi_{\A}^{s}$,
there exists $C_0>0$ such that 
\begin{equation}\label{Gibbs-property}
C_{0}^{-1} \|\A_{I}\|^{s} \leq e^{|I| P\left(\Phi_{\A}^{s}\right)} \mu([I]) \leq C_{0} \|\A_{I}\|^{s}
\end{equation}
for every $I \in \L$. By Corollary \ref{thm-k-qm},  there exist an integer $m\in \N$ and constant $C_{1}>0$ such that for all $I, J \in \L$ there exists $K \in \L(m)$ such that
\begin{equation}\label{k-qm-property}
 \|\A_{IKJ}\| \geq C_1 \|\A_{I}\| \|\A_{J}\|.
\end{equation}
 Therefore, by \eqref{Gibbs-property} and \eqref{k-qm-property}, for every $I, J \in \L$ we have
$$\begin{aligned}
C_1 \mu([I]) \mu([J]) &\leq C_{0}^{2} C_{1} e^{-(|I|+|J|) P\left(\Phi_{\A}^{s}\right)}  \|\A_{I}\|^{s} \|\A_{J}\|^{s} \\
&\leq C_{0}^{2} e^{-(|I|+|J|) P\left(\Phi_{\A}^{s}\right)}   \|\A_{IKJ}\|^{s}  \\
&\leq C_{0}^{3} e^{|K| P\left(\Phi_{\A}^{s}\right)}  \mu([IKJ]) \\
&\leq C_{0}^{3} e^{m P\left(\Phi_{\A}^{s}\right)} \sum_{|K|=m} \mu([IKJ]) \\
& =C_{0}^{3} e^{m P\left(\Phi_{\A}^{s}\right)} \mu\left([I] \cap \sigma^{-m-|I|}[J]\right)
\end{aligned}
$$
so that
\begin{equation}\label{one-side-thm1}
\mu\left([I] \cap \sigma^{-m-|I|}[J]\right) \geq \kappa \mu([I]) \mu([J])
\end{equation}
where $\kappa:=C_{0}^{-3} C_{1} e^{-m P\left(\Phi_{\A}^{s}\right)}$. 

By \eqref{one-side-thm1}, for any $n \geq m$ we have that
$$
\begin{aligned}
\mu\left([I] \cap \sigma^{-n-|I|}[J]\right) &=\sum_{|K'|=n-m} \mu\left([IK'] \cap \sigma^{-m-|K'|-|I|}[ J]\right) \\
& \geq \kappa \sum_{|K'|=n-m} \mu([IK']) \mu([J]) \\
&=\kappa\mu([J]) \sum_{|K'|=n-m} \mu([IK']) \\
&=\kappa \mu([I]) \mu([J]) .
\end{aligned}
$$
Thus we have by an approximation argument that
\[\liminf _{n \rightarrow \infty} \mu\left(X \cap \sigma^{-n} Y\right) \geq \kappa \mu(X) \mu(Y)\]
for all $X, Y$ Borel measurable. The above inequality implies that $\mu$ is totally ergodic. Then, the proof follows from \cite[Theorem 1]{Morris21}.
\end{proof}

\subsection{Shrinking target and recurrence sets}

Let $T:X \to X$ be a topological dynamical system on a compact metric space $(X,d)$. Assume that $\mu$ is an $T$-invariant ergodic measure. By Birkhoff ergodic theorem,  for any ball $B$ in $X$ of positive $\mu$-measure, the set $$S:=\left\{x \in X: T^{n} x \in B, \text{ for infinitely many }  n \in \mathbb{N}\right\}$$
has full $\mu$-measure.

Now consider from the above definition of the set $S$ that both the center and the radius of the ball $B$ are allowed to vary in $n$;  given a function $h :\N \to \R_{+}$  tending to 0 as $n \rightarrow \infty$ and a sequence of points $\{z_n\}_{n\geq 1}$ in $X$, the set $S$ can be generalized to
$$
S(h)=\left\{x \in X: T^{n} x \in B\left(z_{n}, h(n)\right) \text {, for infinitely many } n \in \mathbb{N}\right\}.
$$
This set $S(h)$ is called the \textit{shrinking target}. Then one can ask how large the size of $S(h)$ is in the sense of measure and in the sense of dimension. This is called the \textit{shrinking target} problem by Hill and Velani \cite{HV} which concerns what happens if the target $B$ shrinks with time and more generally if the target also moves around with time. The points in $S(h)$ can be thought of as trajectories which hit a shrinking, moving target infinitely often. 

The shrinking target problem has intricate links to number theory when using naturally
arising sets in Diophantine approximation as the shrinking targets; e.g. see \cite{AB, BR, PS17}. 

The above works mostly concern conformal dynamics or dynamical systems in $\R^1$ and transitioning to higher dimensional non-conformal dynamics presents severe challenges. To overcome
the extreme challenges that affinities pose, a common approach is to “randomise” the affine maps
by considering typical translation parameter. Koivusalo and Ramirez \cite{KR} gave an expression for the Hausdorff dimension of a
self-affine shrinking target problem. They show that for a fixed symbolic target with exponentially
shrinking diameter and well-behaved affine maps, the Hausdorff dimension is typically given by
the zero of an appropriate pressure function. Strong assumptions are made on the affine system,
as well as the fixed target.

Let $\A= \left(A_{1}, A_{2}, \cdots, A_{\ell} \right)$ be a collection of non-singular $2 \times 2$ contracting matrices. Let $\mathbf{t}=\left\{t_{1}, t_{2}, \cdots, t_{\ell}\right\}$ be a collection of $k$ vectors in $\mathbb{R}^{2}$. Let $D_{\mathbf{t}}=\left\{f_{i}(x)=A_{i} x+t_{i}\right\}_{i=1}^{\ell}$  be an iterated function system formed by affine maps on $\R^{2}$.

 For a finite word $i \in \L$, $f_{i}=f_{i_{1}} \circ \cdots \circ f_{i_{n}}$. There exists a unique non-empty compact set $\Lambda \subset \mathbb{R}^{2}$ such that 
$$
\Lambda=\bigcup_{i=1}^{\ell} f_{i}(\Lambda);
$$
See \cite{Hut81}. Suppose that $\ell \geq 2$. Let us denote by $\pi_{\mathbf{t}}$ the natural projection from $\Si$ to the attractor of $\Lambda$, that is,
$$
\pi_{\mathbf{t}}(\mathbf{i})=\lim _{n \rightarrow \infty} f_{i_{1}} \circ \cdots \circ f_{i_{n}}(0)=\sum_{k=1}^{\infty} A_{i_{1}} \cdots A_{i_{k-1}} t_{i_{k}}
$$
Clearly, $\pi_{\mathbf{t}}(\mathbf{i})=f_{i_{1}}\left(\pi_{\mathbf{t}}(\sigma (\mathbf{i}))\right)$.

 We define the \textit{singular value function}  as follows
$$\varphi^{s}(A)= \begin{cases}\|A\|^{s}, & \text { if } 0 \leq s<1 \\ \|A\| \|A^{-1}\|^{-(s-1)}, & \text { if } 1 \leq s<2 \\ |\operatorname{det}(A)|^{s / 2}, & \text { if } 2 \leq s<\infty.\end{cases}$$

The pressure of the self-affine system is defined as
$$
P(\log \varphi^s(\mathcal{A}))=\lim _{n \rightarrow \infty} \frac{1}{n} \log \sum_{I \in \L(n)} \varphi^{s}\left(\A_{I}\right),
$$
whose existence of the limit is guaranteed from the submultiplicativity of $\varphi^{s}(A)$. For simplicity, we denote $P(s):= P(\log \varphi^s(\mathcal{A})).$

Let $\left(J_{k}\right)_{k \in \mathbb{N}} \in\left(\L \right)^{\mathbb{N}}$ be a sequence of target cylinders. We are interested in the shrinking target set
$$
S_{\mathbf{t}}\left(\left(J_{k}\right)_{k \in \mathbb{N}}\right)=\pi_{\mathbf{t}}\left\{\mathrm{i} \in \Si: \sigma^{k} \mathrm{i} \in\left[J_{k}\right] \text { for infinitely many } k \in \mathbb{N}\right\} .
$$

For our sequence of target cylinders, we define the following inverse lower pressure:
$$
\alpha(s):=\liminf_{k \rightarrow \infty} \frac{-1}{|J_{k}|} \log \varphi^{s}\left(\A_{J_{k}}\right)
$$
Let
$$
s_{0}:=\inf \{s>0: P(s) \leq \alpha(s)\} .
$$

Unfortunately, the
uncertainty of the length of the connecting word $K$ in the quasi-multiplicativity property does not let us to
study shrinking target and recurrence sets effectively. In order to study shrinking target and recurrence sets, B\'ar\'any and Troscheit \cite{barany2021dynamically} prove that $\A^{\wedge s}$ is uniform $k$-quasi-multiplicativity for every $s \in (0, d)$ when $\A$ is fully strongly irreducible and fully proximal. In the two dimensional case, we can improve their results \cite[Corollaries 2.7 and 2.8]{barany2021dynamically} by using Corollary \ref{thm-k-qm}.

\begin{lem}\label{singular value is QM}
Assume that $\A=(A_1, \ldots, A_{\ell})$ is a collection of non-singular $2 \times 2$ matrices. Suppose that $\A$ is $k$-quasi multiplicative. Then, for every $s\in [0, 2]$, there exist $k\in \N$ and $C>0$ such that for any $I,J \in \L$, there exists $K = K(I,J) \in \L(k)$ such that
\[
\varphi^{s}(\A_{IKJ}) \geq C \varphi^{s}(\A_{I}) \varphi^{s}(\A_{J}).
\]
\end{lem}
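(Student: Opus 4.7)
The plan is to reduce the inequality for $\varphi^{s}$ to the already-available $k$-quasi-multiplicativity of the norm, by exploiting the special structure of the $2\times 2$ singular value function. For a $2\times 2$ non-singular matrix $A$ with singular values $\alpha_{1}(A)\geq\alpha_{2}(A)>0$ we have $\|A\|=\alpha_{1}(A)$ and $|\det A|=\alpha_{1}(A)\alpha_{2}(A)$, so the three pieces in the definition of $\varphi^{s}$ can be rewritten uniformly as
\begin{equation*}
\varphi^{s}(A)=\|A\|^{\min(s,1)}\,\Bigl(\frac{|\det A|}{\|A\|}\Bigr)^{\max(s-1,0)},
\end{equation*}
i.e.\ $\varphi^{s}(A)=\|A\|^{s}$ on $[0,1]$, $\varphi^{s}(A)=\|A\|^{2-s}|\det A|^{s-1}$ on $[1,2]$, and $\varphi^{2}(A)=|\det A|$. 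Thus only $\|\cdot\|$ and $|\det\cdot|$ enter the picture. I will treat the three ranges separately.

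\textbf{Range $0\leq s\leq 1$.} Let $c,k$ be as in the hypothesis that $\mathcal{A}$ is $k$-quasi-multiplicative, and for given $I,J\in\L$ pick $K\in\L(k)$ with $\|\A_{IKJ}\|\geq c\|\A_{I}\|\|\A_{J}\|$. Raising to the $s$-th power immediately yields
\begin{equation*}
\varphi^{s}(\A_{IKJ})=\|\A_{IKJ}\|^{s}\geq c^{s}\,\varphi^{s}(\A_{I})\,\varphi^{s}(\A_{J}),
\end{equation*}
so the lemma holds with $C=c^{s}$ in this range.

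\textbf{Range $s=2$.} The determinant is multiplicative, so $|\det\A_{IKJ}|=|\det\A_{I}||\det\A_{K}||\det\A_{J}|$. Since there are only finitely many words $K$ of length $k$, the quantity $C_{2}:=\min_{|K|=k}|\det\A_{K}|$ is a positive constant, independent of $I,J$, and we get $\varphi^{2}(\A_{IKJ})\geq C_{2}\varphi^{2}(\A_{I})\varphi^{2}(\A_{J})$.

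\textbf{Range $1<s<2$.} This is the only step that combines both previous observations. For the same choice of $K$ produced by $k$-quasi-multiplicativity, the norm estimate gives $\|\A_{IKJ}\|^{2-s}\geq c^{2-s}\|\A_{I}\|^{2-s}\|\A_{J}\|^{2-s}$ because $2-s\geq 0$, while the determinant identity gives $|\det\A_{IKJ}|^{s-1}\geq C_{2}^{s-1}|\det\A_{I}|^{s-1}|\det\A_{J}|^{s-1}$ since $s-1\geq 0$ and $|\det\A_{K}|\geq C_{2}$. Multiplying the two inequalities and using the factorization of $\varphi^{s}$ recalled in the first paragraph finishes the argument with the constant $C=c^{2-s}C_{2}^{s-1}$.

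I do not anticipate a real obstacle: the proof is a case analysis driven by the fact that in dimension two the singular value function is a product of two quantities, the operator norm (controlled by the hypothesis) and the absolute determinant (which is multiplicative and uniformly bounded below on words of fixed length). The only minor care needed is to keep the exponents $\min(s,1)$ and $\max(s-1,0)$ nonnegative so that monotone inequalities go in the right direction, and to take the constant $C$ to be the minimum of the three case-dependent constants to obtain a single $C$ valid for all $s\in[0,2]$.
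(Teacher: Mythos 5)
Your proposal is correct and takes essentially the same approach as the paper: split into $s\in[0,1]$ (raise the norm inequality to power $s$) and $s\in(1,2]$ (use the $2\times2$ identity $\|A\|\|A^{-1}\|^{-(s-1)}=|\det A|^{s-1}\|A\|^{2-s}$, then combine the norm inequality raised to power $2-s$ with the multiplicativity of the determinant and the uniform lower bound on $|\det\A_K|$ over the finitely many words $K$ of length $k$). Your separate treatment of $s=2$ is just the degenerate instance of the $1<s\leq2$ computation, so it adds no new content; otherwise the arguments coincide.
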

\begin{proof} 
Since $\A$ is is $k$-quasi multiplicative,  there exist $k\in \N$ and $c>0$ such that for any $I,J \in \L$, there exists $K \in \L(k)$ such that
\[
\|\A_{IKJ}\| \geq c\|\A_{I}\|\|\A_{J}\|.
\]
 When $s \in [0, 1]$, the proof follows by raising from $k$-quasi multiplicativity of $\A$ by power $s$.

  Now, we are going to prove it when $1<s \leq 2$. Notice that $\|A\|\|A^{-1}\|^{-(s-1)}=|\det(A)|^{s-1} \|A\|^{2-s}$ for any $A \in \text{GL}_{2}(\R)$. Then, the proof follows by multiplying, raising from $k$-quasi multiplicativity of $\A$ by power $2-s$, and raising from the determinant by power $s-1$.
\end{proof}

\begin{cor} Assume that $\A=(A_1, \ldots, A_{\ell})$ is a collection of $2 \times 2$ contracting matrices and $\left(J_{k}\right)_{k \in \mathbb{N}}$ is a sequence of target cylinders. Suppose that $\A$ and $\A^2$ are irreducible and $\|A_i\|<1 / 2$ for all $i\in \{1, \ldots, \ell \}$. Then
$$
\operatorname{dim}_{H} S_{\mathbf{t}}\left(\left(J_{k}\right)_{k}\right)=\min \left\{2, s_{0}\right\} \text { for Lebesgue-almost every } \mathbf{t} .
$$
Moreover, $\mathcal{L}_{2}\left(S_{\mathbf{t}}\left(\left(J_{k}\right)_{k}\right)\right)>0$ for Lebesgue-almost every $\mathbf{t}$ if $s_{0}>2$.
\end{cor}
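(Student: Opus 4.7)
The plan is to reduce the corollary to the statements of Corollaries 2.7 and 2.8 of \cite{barany2021dynamically} by replacing their hypotheses (strong irreducibility together with proximality) with our weaker irreducibility assumption. First, I would verify that Corollary \ref{thm-k-qm} applies in our setting. Since $d = 2$, the divisors of $d$ are $1$ and $2$, so the condition ``$\A^t$ is irreducible for every $t \mid d$'' reduces to irreducibility of $\A$ and of $\A^2$, and the range $1 \leq m \leq d-1 = 1$ forces $m = 1$ with $\A^{\wedge 1} = \A$. All of these are given in the hypothesis, so Corollary \ref{thm-k-qm} yields that $\A$ is $k$-quasi-multiplicative for some $k \in \N$.

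Next, I would invoke Lemma \ref{singular value is QM} to transfer this to the singular value potential: for every $s \in [0, 2]$ there exists $C_s > 0$ such that for all $I, J \in \L$ one can find $K \in \L(k)$ with $\varphi^s(\A_{IKJ}) \geq C_s \, \varphi^s(\A_I)\,\varphi^s(\A_J)$. Together with the contraction hypothesis $\|A_i\| < 1/2$, which ensures exponential shrinking of cylinders and controls overlaps in the mass-distribution step, this is precisely the input that \cite{barany2021dynamically} extracts from their stronger assumptions before entering the Hausdorff dimension computation.

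With the $k$-quasi-multiplicativity of $\varphi^s$ secured, the stated identity $\dim_H S_{\mathbf{t}}((J_k)_k) = \min\{2, s_0\}$ for Lebesgue-almost every $\mathbf{t}$, and the positive Lebesgue measure conclusion when $s_0 > 2$, follow by substituting our improved input into the proofs of \cite[Corollaries 2.7 and 2.8]{barany2021dynamically} verbatim. Their argument builds a Gibbs-type measure adapted to the sequence $(J_k)_k$ using the $k$-quasi-multiplicativity of $\varphi^s$ and then derives matching upper and lower bounds on $\dim_H S_{\mathbf{t}}$ via the mass distribution principle combined with a Borel--Cantelli estimate involving the pressure $P(s)$ and the inverse lower pressure $\alpha(s)$; none of these ingredients rely on proximality beyond providing a fixed-length connector.

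The main obstacle is verifying that the proofs in \cite{barany2021dynamically} use their strong irreducibility and proximality hypotheses solely to establish the $k$-quasi-multiplicativity of $\varphi^s$ and make no further implicit use of them. In the two-dimensional contracting regime this should be essentially automatic because $\varphi^s$ interpolates in $s$ between the operator norm, the conorm, and the determinant, and Lemma \ref{singular value is QM} packages all three cases uniformly; but one does need to check carefully that the measure-theoretic arguments in \cite{barany2021dynamically} do not secretly exploit proximality (for instance through dominated splittings), and that the value $s = s_0$ which may exceed $1$ is handled by the $1 < s \leq 2$ branch of Lemma \ref{singular value is QM} without deterioration of the constants.
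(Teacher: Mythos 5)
Your proof is correct and follows essentially the same route as the paper: verify that the hypotheses ($d=2$, so $t\mid d$ means $t\in\{1,2\}$ and the only exterior power is $\A^{\wedge 1}=\A$) reduce to the given irreducibility of $\A$ and $\A^2$, deduce $k$-quasi-multiplicativity from Corollary \ref{thm-k-qm}, upgrade to the singular value function $\varphi^s$ via Lemma \ref{singular value is QM}, and then feed this into B\'ar\'any--Troscheit. The one small economy in the paper's version that your proposal misses is that rather than re-running the proofs of \cite[Corollaries~2.7 and~2.8]{barany2021dynamically} with weakened hypotheses and verifying that proximality is not used elsewhere (the ``main obstacle'' you flag), one can simply invoke their abstract \cite[Theorem~2.2]{barany2021dynamically}, which already takes $k$-quasi-multiplicativity of $\varphi^s$ as its hypothesis; this eliminates the need for the careful inspection you describe in your final paragraph.
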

\begin{proof}

By Corollary \ref{thm-k-qm}, $\A$ is $k$-quasi multiplicative. Then,  the statement follows from the combination of \cite[Theorem 2.2]{barany2021dynamically} and Lemma \ref{singular value is QM}.
\end{proof}

Suppose that $\psi: \mathbb{N} \mapsto \mathbb{N}$, and $\beta:=\liminf_{n \rightarrow \infty} \frac{\psi(n)}{n}$. We consider the recurrent set
$$
R_{\mathbf{t}}(\psi):=\pi_{\mathbf{t}}\left\{\mathrm{i} \in \Si: \sigma^{k} \mathrm{i} \in\left[\left.\mathrm{i}\right|_{\psi(k)}\right] \text { for infinitely many } k \in \mathbb{N}\right\} \text {. }
$$
We define the square-pressure function
$$
P_{2}(s):=\lim _{n \rightarrow \infty} \frac{-1}{n} \log \sum_{i \in \L(n)}\left(\varphi^{s}\left(\A_{i}\right)\right)^{2},
$$
where the limit exists because of the subadditivity of $\varphi^{s}(\A)$. Moreover, the pressure is continuous in $s$, strictly increasing, and satisfies $P_{2}(0)=-\log N$ and $P_{2}(s) \rightarrow \infty$ as $s \rightarrow \infty$.
\begin{cor}
Assume that $\A=(A_1, \ldots, A_{\ell})$ is a collection of $2 \times 2$ contracting matrices. Suppose that $\A$ and $\A^2$ are irreducible and $\|A_i\|<1 / 2$ for all $i\in \{1, \ldots, \ell \}$. Let  $\psi :\N \to \N$ with $\beta=\liminf_{n \to \infty} \frac{\psi(n)}{n}<1$, then
$$
\operatorname{dim}_{H} R_{\mathbf{t}}\left(\psi \right)=\min \left\{2, r_{0}\right\} \text { for Lebesgue-almost every } \mathbf{t},
$$
where $r_0$ is the unique solution of the equation
\[(1-\beta) P\left(r_{0}\right)=\beta P_{2}\left(r_{0}\right).\]
Moreover, $\mathcal{L}_{2}\left(R_{\mathbf{t}}\left(\left(\psi \right)_{k}\right)\right)>0$ for Lebesgue-almost every $\mathbf{t}$ if $r_{0}>2$.
\end{cor}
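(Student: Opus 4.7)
The plan is to follow the same template as the preceding shrinking-target corollary: verify the hypotheses of Theorem \ref{thm: main}, use it via Corollary \ref{thm-k-qm} to upgrade irreducibility into $k$-quasi-multiplicativity, transfer this to the singular value function via Lemma \ref{singular value is QM}, and then plug into the corresponding recurrence-set result of B\'ar\'any--Troscheit.

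First I would check that the assumptions of Corollary \ref{thm-k-qm} are satisfied in the two-dimensional setting. Here $d=2$, so the divisor condition $t \mid d$ reduces to $t \in \{1,2\}$, giving the hypotheses ``$\A$ and $\A^2$ are irreducible'', which are assumed. The exterior power condition requires $\A^{\wedge m}$ irreducible for $1 \leq m \leq d-1$, i.e. only $m=1$, and $\A^{\wedge 1} = \A$ is irreducible by assumption. Hence Corollary \ref{thm-k-qm} applies and yields an integer $k \in \N$ such that $\A$ is $k$-quasi-multiplicative.

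Next, I would invoke Lemma \ref{singular value is QM}: since $\A$ is a collection of $2 \times 2$ invertible matrices that is $k$-quasi-multiplicative, for every $s \in [0,2]$ there exist (a possibly enlarged) $k \in \N$ and $C>0$ such that for every $I,J \in \L$ one can find a connecting word $K \in \L(k)$ with
\[
\varphi^{s}(\A_{IKJ}) \geq C\,\varphi^{s}(\A_{I})\,\varphi^{s}(\A_{J}).
\]
This uniform $k$-quasi-multiplicativity of the singular value potential, combined with the contraction assumption $\|A_i\| < 1/2$ (which ensures that the Lebesgue-almost-every translation statement of B\'ar\'any--Troscheit applies), is exactly the input needed for their recurrence-set result analogous to \cite[Theorem 2.2]{barany2021dynamically}.

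Finally I would apply that recurrence-set theorem to conclude that $\dim_{H} R_{\mathbf{t}}(\psi) = \min\{2, r_0\}$ for Lebesgue-almost every $\mathbf{t}$, where $r_0$ is the unique solution of $(1-\beta) P(r_0) = \beta P_2(r_0)$; uniqueness and existence of $r_0$ follow from continuity and strict monotonicity of $P$ and $P_2$ noted in the paper. The positivity of the planar Lebesgue measure $\mathcal{L}_2(R_{\mathbf{t}}(\psi)) > 0$ when $r_0 > 2$ is the second conclusion of the same theorem. The only genuine obstacle is confirming that the $k$-quasi-multiplicativity provided by our Lemma \ref{singular value is QM} is the precise form of the hypothesis used in the B\'ar\'any--Troscheit recurrence theorem (as opposed to the stronger fully proximal/strongly irreducible assumption they originally impose); once this is checked in the two-dimensional case exactly as for the shrinking-target corollary above, the proof is complete.
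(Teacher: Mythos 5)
Your proposal is correct and follows essentially the same route as the paper: verify the hypotheses in dimension $d=2$, apply Corollary \ref{thm-k-qm} to get $k$-quasi-multiplicativity, upgrade to the singular value function via Lemma \ref{singular value is QM}, and plug into the B\'ar\'any--Troscheit recurrence result. The only small imprecision is the citation -- the paper invokes \cite[Theorem 2.4]{barany2021dynamically} for the recurrence set, not an ``analogue of Theorem 2.2'' -- but the logical structure is identical.
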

\begin{proof}
By Corollary \ref{thm-k-qm}, $\A$ is $k$-quasi multiplicative. Thus,  the proof follows from the combination of \cite[Theorem 2.4]{barany2021dynamically} and Lemma \ref{singular value is QM}.
\end{proof}

\bibliographystyle{acm}
\bibliography{fl_spannability}
\end{document}